 \let\footnote=\endnote
\theoremstyle{plain}
\newtheorem{thm}{\protect\theoremname}
\newtheorem{thm}{\protect\theoremname}[chapter]
  \theoremstyle{plain}
  \newtheorem{lem}[thm]{\protect\lemmaname}
  \theoremstyle{remark}
  \newtheorem{rem}[thm]{\protect\remarkname}
  \providecommand{\lemmaname}{Lemma}
  \providecommand{\remarkname}{Remark}
\providecommand{\theoremname}{Theorem}
\begin{document}

\begin{frontmatter}

\begin{fmbox}
\dochead{Research}


\title{On the regularization of solution of an inverse ultraparabolic equation
associated with perturbed final data}


\author[
   addressref={aff1},                   
   corref={aff1},                       
   email={vakhoa.hcmus@gmail.com}   
]{\inits{VAK}\fnm{Vo Anh} \snm{Khoa}}
\author[
   addressref={aff1},
   email={tronglanmath@gmail.com}
]{\inits{LTL}\fnm{Le Trong} \snm{Lan}}
\author[
   addressref={aff1,aff2},
   email={thnguyen2683@gmail.com}
]{\inits{NTH}\fnm{Nguyen Huy} \snm{Tuan}}
\author[
   addressref={aff1},
   email={tthung1992@gmail.com}
]{\inits{TTH}\fnm{Tran The} \snm{Hung}}


\address[id=aff1]{
  \orgname{Department of Mathematics and Computer Science, Ho Chi Minh City University of Science}, 
  \street{227 Nguyen Van Cu Street, District 5},                     %
  \city{Ho Chi Minh City},                              
  \cny{Vietnam}                                    
}
\address[id=aff2]{%
  \orgname{Saigon Institute for Computational Science and Technology},
  \street{Quang Trung Software City, District 12},
  \city{Ho Chi Minh City},
  \cny{Vietnam}
}



\end{fmbox}


\begin{abstractbox}

\begin{abstract} 
In this paper, we study the inverse problem for a class of abstract
ultraparabolic equations which is well-known to be ill-posed. We employ
some elementary results of semi-group theory to present the formula
of solution, then show the instability cause. Since the solution exhibits
unstable dependence on the given data functions, we propose a new
regularization method to stabilize the solution. then obtain the error
estimate. A numerical example shows that the method is efficient and
feasible. This work slightly extends to the earlier results in Zouyed
et al. \cite{key-9} (2014).


\end{abstract}


\begin{keyword}
\kwd{ultraparabolic equation}
\kwd{ill-posed problem}
\kwd{semi-group method}
\kwd{stability}
\kwd{error estimate}
\end{keyword}

\begin{keyword}[class=MSC]
\kwd[47A52]{}
\kwd{20M17}
\kwd[; 26D15]{}
\end{keyword}

\end{abstractbox}
%

\end{frontmatter}



\section{Introduction}

Let us denote $\left\Vert \cdot\right\Vert $ the norm and $\left\langle \cdot,\cdot\right\rangle $
the inner product in $L^{2}\left(0,\pi\right)$, i.e.,

\[
\left\langle u,v\right\rangle =\int_{0}^{\pi}uvdx,\quad\left\Vert u\right\Vert =\sqrt{\int_{0}^{\pi}\left|u\right|^{2}dx}.
\]

In this paper, we consider the following problem: determine a function
$u:\left[0,T\right]\times\left[0,T\right]\to L^{2}\left(0,\pi\right)$
solution to the Cauchy problem

\begin{equation}
\begin{cases}
u_{t}+u_{s}-\Delta u=f\left(x,t,s\right), & \left(x,t,s\right)\in\left[0,\pi\right]\times\left[0,T\right]\times\left[0,T\right],\\
u\left(0,t,s\right)=u\left(\pi,t,s\right)=0, & \left(t,s\right)\in\left[0,T\right]\times\left[0,T\right],\\
u\left(x,T,s\right)=\psi\left(x,s\right), & \left(x,s\right)\in\left[0,\pi\right]\times\left[0,T\right],\\
u\left(x,t,T\right)=\varphi\left(x,t\right), & \left(x,t\right)\in\left[0,\pi\right]\times\left[0,T\right],
\end{cases}\label{eq:1}
\end{equation}

with corresponding perturbed data functions $\left(\psi^{\varepsilon},\varphi^{\varepsilon}\right)$
satisfying

\[
\left\Vert \psi^{\varepsilon}-\psi\right\Vert \le\varepsilon,\quad\left\Vert \varphi^{\varepsilon}-\varphi\right\Vert \le\varepsilon,
\]

where $\psi^{\varepsilon}$ and $\varphi^{\varepsilon}$ play roles
as perturbed functions and $\varepsilon>0$ represents a bound between
the exact function $\left(\varphi,\psi\right)$ and the perturbed $\left(\varphi^{\varepsilon},\psi^{\varepsilon}\right)$
over $L^{2}\left(0,\pi\right)$ and the given function $f$ is called
the source function.

Ultraparabolic equations arise in several areas of science, such as
mathematical biology in population dynamics \cite{key-13} and probability
in connection with multi-parameter Brownian motion \cite{key-17},
and in the theory of boundary layers \cite{key-12}. Due to their
applications, ultraparabolic equations have gained considerable attention
in many mathematical aspects (see, e.g., \cite{key-2,key-4,key-5,key-9,key-11,key-13}
and the references therein).

In the mathematical literature, various types of ultraparabolic problems
have been solved. There have been some papers dealing with the existence
and uniqueness of solutions for ultraparabolic equations, e.g. \cite{key-13,key-19,key-22}.
As the pioneer in numerical methods for such equations, Akrivis et
al. \cite{key-4} numerically approximated the solution of a prototype
ultraparabolic equation by applying a fixed-step backward Euler scheme
and second-order box-type finite difference method. Some extension
works for the numerical angle should be mentioned are \cite{key-21,key-23}
by A. Ashyralyev-S. Yilmaz and Michael D. Marcozzi, respectively.
We also remark that, in general, ultraparabolic equations do not possess
properties that are closely fundamental to many kinds of parabolic
equations including strong maximum principles, a priori estimates,
and so on.

In the phase of ultraparabolic ill-posed problems, the authors F.
Zouyed and F. Rebbani, very recently, proposed in \cite{key-9} the
modified quasi-boundary value method to regularize the solution of
the problem (\ref{eq:1}) in homogeneous backward case $f\equiv0$.
In particular, via the instability terms in the form of the solution
of (\ref{eq:1}) (cf. \cite[Theorem 1.1]{key-2}) they established
an approximate problem by replacing $\mathcal{A}_{\alpha}=\mathcal{A}\left(I+\alpha\mathcal{A}^{-1}\right)$
for the operator $\mathcal{A}$ and taking the perturbation $\alpha$
into final conditions of the ill-posed problem, and obtained the convergence
order $\alpha^{\theta},\theta\in\left(0,1\right)$. Motivated by that
work, this paper is devoted to investigate a new regularization method.

In the past, many approaches have been studied for solving ill-posed
problems, especially the backward heat problems. For example, Lattès
and Lions \cite{key-18}, Showalter \cite{key-24} and Boussetila
and Rebbani \cite{key-26} used quasi-reversibility method; in \cite{key-22}
Ames et al. applied the least squares method with Tikhonov-type regularization;
Clark and Oppenheimer \cite{key-15}, Denche and Bessila \cite{key-14}
and Trong et al. \cite{key-29} used quasi-boundary value method.
Moreover, some other methods should be listed are the mollification
method by Hao \cite{key-32} and the operator-splitting method studied
by Kirkup and Wadsworth \cite{key-27}. To the best of the author’s
knowledge, although there are many works on several types of parabolic
backward problems, the theoretical literature on regularizing the
inverse problems for ultraparabolic equations is very scarce. Therefore,
proposing a regularization method for the problem (\ref{eq:1}) is
the scope of this paper.

Our work presented in this paper has the following features. Firstly,
for ease of the reading, we summarize in Section 2 some well-known
facts in semi-group of operator and present the formula of the solution
of (\ref{eq:1}). Secondly, in Section 3 we construct the regularized
solution based on our method, then obtain the error estimate. Finally,
a numerical example is given in Section 4 to illustrate the efficiency
of the result.

\section{Preliminaries}

The operator $-\Delta$ is a positive self-adjoint unbounded linear
operator on $L^{2}\left(0,\pi\right)$. Therefore, it can be applied
to some elementary results in \cite{key-2,key-6,key-7,key-9}. Particularly,
the formula of the solution of the problem (\ref{eq:1}) can be obtained
by L. Lorenzi et al. \cite{key-2} and the authors in \cite{key-6,key-7}
gave a detailed description on fundamental properties of the generalized
operator. In this section, we thus recall those results in which we
want to apply to our main results in this paper. We list them and
skip their proofs for conciseness.

In fact, we shall study in this section the generalized formula of
the solution by the following operator equation in terms of semi-group
theory.

\begin{equation}
\begin{cases}
u_{t}+u_{s}+\mathcal{A}u=f\left(t,s\right), & \left(t,s\right)\in\left[0,T\right]\times\left[0,T\right],\\
u\left(T,s\right)=\psi\left(s\right), & s\in\left[0,T\right],\\
u\left(t,T\right)=\varphi\left(t\right), & t\in\left[0,T\right],
\end{cases}\label{eq:2}
\end{equation}

where $\mathcal{A}$ is a positive self-adjoint unbounded linear operator
on the Hilbert space $\mathcal{H}$.

We denote by $\left\{ E_{\lambda},\lambda>0\right\} $ the spectral
resolution of the identify associated to $\mathcal{A}$. Let us denote

\[
S\left(t\right)=e^{-t\mathcal{A}}=\int_{0}^{\infty}e^{-t\lambda}dE_{\lambda}\in\mathcal{L}\left(\mathcal{H}\right),\quad t\ge0,
\]

the $C_{0}$-semigroup of contractions generated by $-\mathcal{A}$
($\mathcal{L}\left(\mathcal{H}\right)$ stands for the Banach algebra
of bounded linear operators on $\mathcal{H}$). Then

\begin{equation}
\mathcal{A}u=\int_{0}^{\infty}\lambda dE_{\lambda}u,\label{eq:3}
\end{equation}

for all $u\in\mathcal{D}\left(\mathcal{A}\right)$. In this connection,
$u\in\mathcal{D}\left(\mathcal{A}\right)$ iff the integral (\ref{eq:3})
exists, i.e.,

\[
\int_{0}^{\infty}\lambda^{2}d\left\Vert E_{\lambda}u\right\Vert ^{2}<\infty.
\]

For this family of operators $\left\{ S\left(t\right)\right\} _{t\ge0}$
we have:

1. $\left\Vert S\left(t\right)\right\Vert \le1$ for all $t\ge0$;

2. the function $t\mapsto S\left(t\right),t>0$ is analytic;

3. for every real $r\ge0$ and $t>0$, the operator $S\left(t\right)\in\mathcal{L}\left(\mathcal{H},\mathcal{D}\left(\mathcal{A}^{r}\right)\right)$;

4. for every integer $k\ge0$ and $t>0$, $\left\Vert S^{k}\left(t\right)\right\Vert =\left\Vert \mathcal{A}^{k}S\left(t\right)\right\Vert \le c\left(k\right)t^{-k}$;

5. for every $x\in\mathcal{D}\left(\mathcal{A}^{r}\right),r\ge0$,
we have $S\left(t\right)\mathcal{A}^{r}x=\mathcal{A}^{r}S\left(t\right)x$.
\begin{rem}
In the sequel, let us denote

\[
D_{1}=\left\{ \left(t,s\right)\in\left[0,T\right]\times\left[0,T\right]:0\le s\le t\le T\right\} ;
\]

\[
D_{2}=\left\{ \left(t,s\right)\in\left[0,T\right]\times\left[0,T\right]:0\le t\le s\le T\right\} ,
\]

and make some conditions on the given functions as follows:

(A1) $\varphi\in C\left(\left[0,T\right];\mathcal{D}\left(\mathcal{A}\right)\right)\cap C^{1}\left(\left[0,T\right];\mathcal{H}\right)$;

(A2) $\psi\in C\left(\left[0,T\right];\mathcal{D}\left(\mathcal{A}\right)\right)\cap C^{1}\left(\left[0,T\right];\mathcal{H}\right)$;

(A3) $\varphi\left(0\right)=\psi\left(0\right)$;

(A4) $f\in C\left(\left[0,T\right]\times\left[0,T\right];\mathcal{H}\right)\cap C^{1}\left(D_{1}\times D_{2};\mathcal{H}\right)$.

In the following theorems, we show the formula of solution of the
problem (\ref{eq:2}) by employing Theorem 1.1 in \cite{key-2} with
$a_{1}\left(t\right)=a_{2}\left(s\right)=1$ and following the steps
in \cite{key-9}.\end{rem}
\begin{thm}
Under the conditions (A1)-(A4), the problem

\begin{equation}
\begin{cases}
u_{t}+u_{s}+\mathcal{A}u=f\left(t,s\right), & \left(t,s\right)\in\left[0,T\right]\times\left[0,T\right],\\
u\left(0,s\right)=\psi\left(s\right), & s\in\left[0,T\right],\\
u\left(t,0\right)=\varphi\left(t\right), & t\in\left[0,T\right],
\end{cases}\label{eq:4}
\end{equation}

admits a unique solution $u$ presented by the following formula.
For any $\left(t,s\right)\in D_{1}$,

\[
u\left(t,s\right)=S\left(s\right)\varphi\left(t-s\right)+\int_{0}^{s}S\left(s-\eta\right)f\left(t-s+\eta,\eta\right)d\eta,
\]

and for any $\left(t,s\right)\in D_{2}$,

\[
u\left(t,s\right)=S\left(t\right)\psi\left(s-t\right)+\int_{0}^{t}S\left(t-\eta\right)f\left(\eta,s-t+\eta\right)d\eta.
\]

Moreover, the solution $u$ belongs to the space $C\left(\left[0,T\right]\times\left[0,S\right];\mathcal{D}\left(\mathcal{A}\right)\right)\cap C^{1}\left(\left[0,T\right]\times\left[0,S\right];\mathcal{H}\right)$.
\end{thm}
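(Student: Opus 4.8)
The plan is to exploit the fact that the differential operator $\partial_{t}+\partial_{s}$ is a directional derivative along the direction $\left(1,1\right)$, and hence to reduce the abstract ultraparabolic equation in (\ref{eq:4}) to a one-parameter abstract Cauchy problem by the method of characteristics. Fix a point $\left(t,s\right)\in D_{1}$, so that $s\le t$. Following the characteristic line through $\left(t,s\right)$ backwards in the direction $\left(-1,-1\right)$, one first meets the boundary segment $\left\{ s=0\right\} $ at the point $\left(t-s,0\right)$, where the datum $\varphi$ is prescribed. I would therefore set $w\left(\eta\right):=u\left(t-s+\eta,\eta\right)$ for $\eta\in\left[0,s\right]$, so that $w\left(0\right)=\varphi\left(t-s\right)$ and $w\left(s\right)=u\left(t,s\right)$. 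Differentiating and using (\ref{eq:4}) gives the abstract linear evolution equation
\begin{equation}
w'\left(\eta\right)+\mathcal{A}w\left(\eta\right)=g\left(\eta\right),\qquad g\left(\eta\right):=f\left(t-s+\eta,\eta\right),\label{eq:charode}
\end{equation}
which is exactly the inhomogeneous Cauchy problem governed by the generator $-\mathcal{A}$ of the semigroup $\left\{ S\left(t\right)\right\} $.

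Next I would solve (\ref{eq:charode}) by the variation-of-parameters (Duhamel) formula, obtaining $w\left(\eta\right)=S\left(\eta\right)w\left(0\right)+\int_{0}^{\eta}S\left(\eta-\sigma\right)g\left(\sigma\right)d\sigma$. Evaluating at $\eta=s$ and substituting back reproduces the claimed representation on $D_{1}$,
\[
u\left(t,s\right)=S\left(s\right)\varphi\left(t-s\right)+\int_{0}^{s}S\left(s-\eta\right)f\left(t-s+\eta,\eta\right)d\eta.
\]
For $\left(t,s\right)\in D_{2}$, where $t\le s$, the same characteristic now first meets $\left\{ t=0\right\} $ at $\left(0,s-t\right)$, so I would instead set $w\left(\eta\right):=u\left(\eta,s-t+\eta\right)$ with $w\left(0\right)=\psi\left(s-t\right)$; the identical computation yields the second formula. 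I would also check the compatibility condition (A3), $\varphi\left(0\right)=\psi\left(0\right)$, which forces the two representations to coincide on the diagonal $t=s$ (where $D_{1}$ and $D_{2}$ overlap) and is therefore precisely what is needed for the global solution to be well defined and continuous across the diagonal.

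It then remains to verify that these formulas define a genuine (classical) solution and that it is unique. Uniqueness is the easy part: the difference of two solutions satisfies the homogeneous version of (\ref{eq:charode}) along every characteristic with $w\left(0\right)=0$, whence $w\equiv0$ by the contractivity $\left\Vert S\left(\eta\right)\right\Vert \le1$ (or a Gronwall estimate). The main obstacle is the regularity claim, i.e.\ showing $u\left(t,s\right)\in\mathcal{D}\left(\mathcal{A}\right)$ with $u\in C\left(\cdots;\mathcal{D}\left(\mathcal{A}\right)\right)\cap C^{1}\left(\cdots;\mathcal{H}\right)$ and that $u$ actually satisfies the equation pointwise. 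For the semigroup term this follows from the smoothing property (item 3) and analyticity (item 2) together with (A1)--(A2). For the Duhamel integral the delicate point is differentiating $\int_{0}^{\eta}S\left(\eta-\sigma\right)g\left(\sigma\right)d\sigma$ and showing its values lie in $\mathcal{D}\left(\mathcal{A}\right)$: since no range condition $g\left(\sigma\right)\in\mathcal{D}\left(\mathcal{A}\right)$ is assumed, I would invoke the $C^{1}$ regularity of $f$ from (A4) and integrate by parts, using $\frac{d}{d\sigma}S\left(\eta-\sigma\right)=\mathcal{A}S\left(\eta-\sigma\right)$ to transfer the derivative onto $g$, which is the standard route by which a $C^{1}$ forcing term upgrades a mild solution to a classical one. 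Assembling these pieces, and matching across the diagonal via (A3), gives the stated regularity.
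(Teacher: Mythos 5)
There is nothing in the paper to compare your argument against: this theorem is one of the preliminary results the authors explicitly import from Lorenzi \cite{key-2} (Theorem 1.1 there, specialized to $a_{1}\left(t\right)=a_{2}\left(s\right)=1$), with the remark that the proofs are ``skipped for conciseness''; the paper only \emph{uses} the statement, via the substitution $\tau=T-t,\xi=T-s$, to derive the representation formulas of its Theorems 3 and 4. Your proof is therefore a genuine reconstruction rather than a rediscovery of the paper's route, and its core is sound: parametrizing along the lines $t-s=\mathrm{const}$ reduces the equation to the abstract Cauchy problem $w'+\mathcal{A}w=g$; the derivation of Duhamel's formula (differentiating $\sigma\mapsto S\left(\eta-\sigma\right)w\left(\sigma\right)$) simultaneously yields uniqueness; the two stated representations drop out exactly as you say; and (A4) plus the standard ``$C^{1}$ forcing upgrades mild to classical solutions'' theorem handles the Duhamel term on each triangle $D_{1}$, $D_{2}$.

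The gap is in your final assembly step, where you assert that matching across the diagonal via (A3) ``gives the stated regularity.'' Agreement of the two branches on $\left\{ t=s\right\} $ gives continuity, and it automatically matches the tangential (characteristic) derivative $\left(\partial_{t}+\partial_{s}\right)u$; but membership in $C^{1}\left(\left[0,T\right]^{2};\mathcal{H}\right)$ requires $u_{t}$ and $u_{s}$ \emph{separately} to be continuous across the diagonal, and that needs a first-order corner compatibility condition which (A1)--(A4) do not supply. Indeed, any solution that is $C^{1}$ up to the corner must satisfy the equation at $\left(0,0\right)$, i.e. $\varphi'\left(0\right)+\psi'\left(0\right)+\mathcal{A}\varphi\left(0\right)=f\left(0,0\right)$. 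If this fails, your two formulas still agree on the diagonal but their gradients jump: take $\mathcal{H}=\mathbb{R}$, $\mathcal{A}=a>0$, $f\equiv0$, $\varphi\left(t\right)=e^{-bt}$, $\psi\left(s\right)=e^{-ds}$ (so (A1)--(A3) hold); then on the diagonal $\partial_{t}\left[S\left(s\right)\varphi\left(t-s\right)\right]=-be^{-at}$ while $\partial_{t}\left[S\left(t\right)\psi\left(s-t\right)\right]=\left(d-a\right)e^{-at}$, and these differ unless $b+d=a$, which is exactly the corner condition. So either that condition must be added to the hypotheses, or the $C^{1}$ claim must be read on each of $D_{1}$ and $D_{2}$ separately. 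This defect is shared by the paper's own loose restatement of Lorenzi's theorem, but since your write-up explicitly claims the full regularity follows from (A3) alone, the step as written would fail; everything else in your plan is correct.
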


\begin{thm}
Under the conditions (A1)-(A4), if the problem

\begin{equation}
\begin{cases}
u_{t}+u_{s}-\mathcal{A}u=f\left(t,s\right), & \left(t,s\right)\in\left[0,T\right]\times\left[0,T\right],\\
u\left(0,s\right)=\psi\left(s\right), & s\in\left[0,T\right],\\
u\left(t,0\right)=\varphi\left(t\right), & t\in\left[0,T\right],
\end{cases}\label{eq:5}
\end{equation}

admits a solution $u$, then this solution can be presented by

\[
u\left(t,s\right)=\begin{cases}
S^{-1}\left(t\right)\psi\left(s-t\right)+\int_{s-t}^{s}S\left(\eta-s\right)f\left(t-s+\eta,\eta\right)d\eta, & \left(t,s\right)\in D_{2},\\
S^{-1}\left(s\right)\varphi\left(t-s\right)+\int_{t-s}^{t}S\left(\eta-t\right)f\left(\eta,\eta+s-t\right)d\eta, & \left(t,s\right)\in D_{1}.
\end{cases}
\]
\end{thm}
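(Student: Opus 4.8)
The plan is to proceed by the method of characteristics, reducing the operator equation \eqref{eq:5} to a one-parameter family of linear evolution equations along the diagonal lines $t-s=\mathrm{const}$, on which the combination $u_t+u_s$ becomes a total derivative. Along such a line the minus sign in front of $\mathcal{A}$ produces the \emph{backward} evolution governed by $e^{\eta\mathcal{A}}=S^{-1}(\eta)$, which is precisely the source of ill-posedness and the reason why $S$ gets replaced by $S^{-1}$ in the asserted formula, in contrast with the preceding theorem for the well-posed $+\mathcal{A}$ case.

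Concretely, fix $(t,s)\in D_2$, so that $0\le t\le s$, and note that the characteristic through $(t,s)$ meets the face $\{t=0\}$ at the point $(0,s-t)$, where the datum $\psi$ is prescribed. Introducing $w(\eta):=u(\eta,\,s-t+\eta)$ for $\eta\in[0,t]$ and using the chain rule together with \eqref{eq:5}, I obtain the abstract Cauchy problem
\[
w'(\eta)=\bigl(u_t+u_s\bigr)(\eta,s-t+\eta)=\mathcal{A}\,w(\eta)+f(\eta,s-t+\eta),\qquad w(0)=\psi(s-t).
\]
The variation-of-constants formula then gives $w(\eta)=e^{\eta\mathcal{A}}w(0)+\int_0^\eta e^{(\eta-\xi)\mathcal{A}}f(\xi,s-t+\xi)\,d\xi$; evaluating at $\eta=t$ and writing $e^{\eta\mathcal{A}}=S^{-1}(\eta)$ and $e^{(\eta-\xi)\mathcal{A}}=S(\xi-\eta)$ yields
\[
u(t,s)=S^{-1}(t)\,\psi(s-t)+\int_0^t S(\xi-t)\,f(\xi,s-t+\xi)\,d\xi.
\]
The substitution $\eta=s-t+\xi$ (so that $S(\xi-t)=S(\eta-s)$ and $f(\xi,s-t+\xi)=f(t-s+\eta,\eta)$, with limits running from $s-t$ to $s$) turns the integral into $\int_{s-t}^{s}S(\eta-s)f(t-s+\eta,\eta)\,d\eta$, which is exactly the stated expression on $D_2$. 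The case $(t,s)\in D_1$ is entirely symmetric: one follows the characteristic to the face $\{s=0\}$ where $\varphi$ is given, sets $w(\eta)=u(t-s+\eta,\eta)$, solves the same ODE with $w(0)=\varphi(t-s)$, and after the change of variable $\eta=t-s+\xi$ recovers the $D_1$ formula. Because the ODE has a unique solution once $w(0)$ is fixed, this argument also shows that \emph{any} solution must coincide with these formulas, which matches the conditional phrasing of the statement.

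I expect the only genuine difficulty to be the rigorous justification of the formal steps in the presence of the \emph{unbounded} operator $\mathcal{A}$ and of $S^{-1}(\eta)=e^{\eta\mathcal{A}}$. Unlike the forward situation of the preceding theorem---where property~3 guarantees that $S(\eta)$ smooths $\mathcal{H}$ into every $\mathcal{D}(\mathcal{A}^r)$---here the evolution roughens the data, so one must verify that $w(\eta)$ remains in $\mathcal{D}(\mathcal{A})$, that $\eta\mapsto S^{-1}(\eta)\psi(s-t)$ is differentiable, and that the Duhamel representation is valid in this unbounded setting. This is where the hypotheses $\psi,\varphi\in C([0,T];\mathcal{D}(\mathcal{A}))\cap C^1([0,T];\mathcal{H})$ and the regularity of $f$ in (A1)--(A4) enter, and---crucially---where the standing assumption that a solution $u$ already exists does the work: given such a $u$, the composition $w(\eta)$ inherits the needed differentiability so the manipulations are legitimate. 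Testing each identity against the spectral resolution $\{E_\lambda\}$ reduces every step to scalar calculus on the coefficients, thereby removing any ambiguity about domains and validating the passage to $S^{-1}$.
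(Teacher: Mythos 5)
Your proof is correct, but it takes a genuinely different route from the paper. The paper never touches the characteristics directly: it reflects time via $\tau=T-t$, $\xi=T-s$, so that the backward problem (\ref{eq:5}) turns into the forward problem (\ref{eq:4}), invokes Theorem 2 to represent the reflected solution in terms of the (unknown) traces of $u$ on the far faces $\{t=T\}$ and $\{s=T\}$, then evaluates that representation at $s=0$ and $t=0$ to bring in the actual data $\varphi,\psi$, and finally uses the semigroup law to solve for the unknown far-face traces and substitute them back --- an algebraic elimination argument resting on the quoted forward result. You instead integrate directly along the lines $t-s=\mathrm{const}$, reduce to a one-parameter abstract ODE with datum $\psi(s-t)$ (resp. $\varphi(t-s)$), and apply variation of constants; your change of variables reproduces the stated integrals exactly, and the argument is self-contained (it does not use Theorem 2 at all) and makes transparent why $S^{-1}$ must appear. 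The price is exactly the one you flag in your last paragraph: the Duhamel formula written with $e^{\eta\mathcal{A}}$ presupposes $\psi(s-t)\in\mathcal{D}\bigl(e^{t\mathcal{A}}\bigr)$, which the hypotheses (A1)--(A4) do not give. The clean repair is to run the integrating factor the other way: for a given solution $u$, set $w(\eta)=u(\eta,s-t+\eta)$ and check $\frac{d}{d\xi}\bigl[S(\xi)w(\xi)\bigr]=S(\xi)f(\xi,s-t+\xi)$, so that $S(\eta)w(\eta)=w(0)+\int_0^{\eta}S(\xi)f(\xi,s-t+\xi)\,d\xi$; this identity involves only the bounded operators $S(\xi)$, shows the right-hand side lies in the range of $S(\eta)$, and then inverting $S(\eta)$ is legitimate precisely because $w(\eta)=u(\eta,s-t+\eta)$ exists by hypothesis. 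This is equivalent to your proposed spectral-resolution justification, so your sketch closes its own gap; it also yields directly that \emph{any} solution satisfies the formula, without appealing to a separate uniqueness statement for the ill-posed ODE. In short: the paper buys rigor by recycling the forward representation at the cost of a longer chain of substitutions and dependence on an external theorem; your argument is shorter and more elementary, provided the variation-of-constants step is phrased in the integrating-factor form rather than with the unbounded propagator applied first.
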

\begin{proof}
We put $\tau=T-t,\xi=T-s$ and write

\[
u\left(t,s\right)=u\left(T-\tau,T-\xi\right):=v\left(\tau,\xi\right),
\]

the function $v\left(\tau,\xi\right):\left[0,T\right]\times\left[0,T\right]\to\mathcal{H}$
satisfies the problem (\ref{eq:4}), namely,

\[
\begin{cases}
v_{\tau}+v_{\xi}+\mathcal{A}v=F\left(\tau,\xi\right)\equiv-f\left(T-\tau,T-\xi\right), & \left(\tau,\xi\right)\in\left[0,T\right]\times\left[0,T\right],\\
v\left(0,\xi\right)=\psi_{1}\left(\xi\right)\equiv u\left(T,T-\xi\right), & \xi\in\left[0,T\right],\\
v\left(\tau,0\right)=\varphi_{1}\left(\tau\right)\equiv u\left(T-\tau,T\right), & \tau\in\left[0,T\right].
\end{cases}
\]

Thanks to Theorem 2, $v\left(\tau,\xi\right)$ is given by

\[
v\left(\tau,\xi\right)=\begin{cases}
S\left(\xi\right)\varphi_{1}\left(\tau-\xi\right)+\int_{0}^{\xi}S\left(\xi-\eta\right)F\left(\tau-\xi+\eta,\eta\right)d\eta, & \left(\tau,\xi\right)\in D_{1},\\
S\left(\tau\right)\psi_{1}\left(\xi-\tau\right)+\int_{0}^{\tau}S\left(\tau-\eta\right)F\left(\eta,\xi-\tau+\eta\right)d\eta, & \left(\tau,\xi\right)\in D_{2}.
\end{cases}
\]

It follows that

\[
u\left(t,s\right)=\begin{cases}
S\left(T-s\right)u\left(T+t-s,T\right)-\int_{0}^{T-s}S\left(T-s-\eta\right)f\left(T+t-s-\eta,T-\eta\right)d\eta, & \left(t,s\right)\in D_{2},\\
S\left(T-t\right)u\left(T,T+s-t\right)-\int_{0}^{T-t}S\left(T-t-\eta\right)f\left(T-\eta,T+s-t-\eta\right)d\eta, & \left(t,s\right)\in D_{1}.
\end{cases}
\]

Thus, we obtain

\begin{equation}
u\left(t,s\right)=\begin{cases}
S\left(T-s\right)u\left(T+t-s,T\right)-\int_{s}^{T}S\left(\zeta-s\right)f\left(t-s+\zeta,\zeta\right)d\zeta, & \left(t,s\right)\in D_{2},\\
S\left(T-t\right)u\left(T,T+s-t\right)-\int_{t}^{T}S\left(\zeta-t\right)f\left(\zeta,\zeta+s-t\right)d\zeta, & \left(t,s\right)\in D_{1},
\end{cases}\label{eq:6}
\end{equation}

by the maps $\zeta=T-\eta$ in the integrals. We can see by the initial
conditions of (\ref{eq:5}) that

\[
u\left(t,0\right)=\varphi\left(t\right)=S\left(T-t\right)u\left(T,T-t\right)-\int_{t}^{T}S\left(\zeta-t\right)f\left(\zeta,\zeta-t\right)d\zeta,
\]

\[
u\left(0,s\right)=\psi\left(s\right)=S\left(T-s\right)u\left(T-s,T\right)-\int_{s}^{T}S\left(\zeta-s\right)f\left(\zeta-s,\zeta\right)d\zeta,
\]

which leads to

\[
\begin{cases}
\varphi\left(t-s\right)=S\left(T-t+s\right)u\left(T,T-t+s\right)-\int_{t-s}^{T}S\left(\zeta-t+s\right)f\left(\zeta,\zeta-t+s\right)d\zeta, & \left(t,s\right)\in D_{1},\\
\psi\left(s-t\right)=S\left(T-s+t\right)u\left(T-s+t,T\right)-\int_{s-t}^{T}S\left(\zeta-s+t\right)f\left(\zeta-s+t,\zeta\right)d\zeta, & \left(t,s\right)\in D_{2}.
\end{cases}
\]

By virtual of semi-group properties, we get

\begin{equation}
\begin{cases}
S^{-1}\left(s\right)\varphi\left(t-s\right)=S\left(T-t\right)u\left(T,T-t+s\right)-\int_{t-s}^{T}S\left(\zeta-t\right)f\left(\zeta,\zeta-t+s\right)d\zeta, & \left(t,s\right)\in D_{1},\\
S^{-1}\left(t\right)\psi\left(s-t\right)=S\left(T-s\right)u\left(T-s+t,T\right)-\int_{s-t}^{T}S\left(\zeta-s\right)f\left(\zeta-s+t,\zeta\right)d\zeta, & \left(t,s\right)\in D_{2}.
\end{cases}\label{eq:7}
\end{equation}

Substituting (\ref{eq:7}) into (\ref{eq:6}), we thus have

\[
u\left(t,s\right)=\begin{cases}
S^{-1}\left(t\right)\psi\left(s-t\right)+\int_{s-t}^{s}S\left(\zeta-s\right)f\left(t-s+\zeta,\zeta\right)d\zeta, & \left(t,s\right)\in D_{2},\\
S^{-1}\left(s\right)\varphi\left(t-s\right)+\int_{t-s}^{t}S\left(\zeta-t\right)f\left(\zeta,\zeta+s-t\right)d\zeta, & \left(t,s\right)\in D_{1}.
\end{cases}
\]
\end{proof}
\begin{thm}
Under the conditions (A1), (A2) and (A4), if the problem (\ref{eq:2})
with $\varphi\left(T\right)=\psi\left(T\right)$ admits a solution
$u$, then this solution can be given by

\[
u\left(t,s\right)=\begin{cases}
S^{-1}\left(T-t\right)\psi\left(T+s-t\right)+\int_{t}^{T}S\left(\eta-T\right)f\left(\eta-t,\eta-s\right)d\eta, & \left(t,s\right)\in D_{1},\\
S^{-1}\left(T-s\right)\varphi\left(T+t-s\right)+\int_{s}^{T}S\left(\eta-T\right)f\left(\eta-t,\eta-s\right)d\eta, & \left(t,s\right)\in D_{2}.
\end{cases}
\]
\end{thm}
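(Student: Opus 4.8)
The plan is to reduce the inverse (final-value) problem \eqref{eq:2} to the initial-value problem \eqref{eq:5}, which is already solved in Theorem 3, by reflecting both time variables; this mirrors the proof of Theorem 3, in which \eqref{eq:5} was itself reduced to the well-posed problem \eqref{eq:4} through Theorem 2. First I would set $\tau=T-t$ and $\xi=T-s$ and define $v(\tau,\xi):=u(T-\tau,T-\xi)$. Because $v_\tau=-u_t$ and $v_\xi=-u_s$, the equation $u_t+u_s+\mathcal{A}u=f$ becomes $v_\tau+v_\xi-\mathcal{A}v=-f(T-\tau,T-\xi)$, i.e. the operator equation of \eqref{eq:5} with source $F(\tau,\xi):=-f(T-\tau,T-\xi)$.

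The next step is to translate the data. The two final conditions $u(T,s)=\psi(s)$ and $u(t,T)=\varphi(t)$ become the \emph{initial} conditions $v(0,\xi)=\psi(T-\xi)=:\psi_1(\xi)$ and $v(\tau,0)=\varphi(T-\tau)=:\varphi_1(\tau)$, so $v$ solves \eqref{eq:5} with data $(\psi_1,\varphi_1,F)$ posed on $\{\tau=0\}$ and $\{\xi=0\}$ --- exactly the location required by Theorem 3. Here the hypotheses of Theorem 3 are supplied by those of the present statement: (A1), (A2), (A4) are invariant under the reflection $t\mapsto T-t$, $s\mapsto T-s$, while the compatibility assumption $\varphi(T)=\psi(T)$ is precisely $\varphi_1(0)=\psi_1(0)$, i.e. condition (A3) for the reflected data. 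Note that, in contrast with the proof of Theorem 3, no auxiliary re-expression of boundary traces (the analogue of \eqref{eq:7}) is needed, since the given data of \eqref{eq:2} already sit on the faces $t=T$, $s=T$ that the reflection sends to $\tau=0$, $\xi=0$.

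Then I would apply Theorem 3 to $v$ and undo the substitution. The one bookkeeping point to keep in mind is that the reflection swaps the two triangles: $(t,s)\in D_1$ corresponds to $(\tau,\xi)\in D_2$ and conversely, so the $D_1$-branch of the final formula is produced by the $\psi_1$-branch of Theorem 3. Reading off the homogeneous parts is immediate: $S^{-1}(\tau)\psi_1(\xi-\tau)=S^{-1}(T-t)\psi(T+s-t)$ on $D_1$ and $S^{-1}(\xi)\varphi_1(\tau-\xi)=S^{-1}(T-s)\varphi(T+t-s)$ on $D_2$, which are exactly the leading terms claimed. For the convolution term I would substitute $F=-f(T-\cdot,T-\cdot)$ and then shift the integration variable ($\eta\mapsto\eta+s$ on $D_1$, $\eta\mapsto\eta+t$ on $D_2$) so that the semigroup factor $S(\eta-\xi)$, resp. $S(\eta-\tau)$, collapses to $S(\eta-T)$ on the common range $[\,t,T\,]$, resp. $[\,s,T\,]$.

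The hard part will be purely computational: controlling the inhomogeneous integral through a reflection followed by a shift. I must keep track of the limits, of the repeated occurrence of $S$ evaluated at negative arguments (the unbounded factors $S^{-1}$ that embody the ill-posedness, handled via the semigroup law and the commutation $S(t)\mathcal{A}^{r}=\mathcal{A}^{r}S(t)$), and especially of the two arguments of $f$ as they pass through $T-\eta$ and the shift; landing the sign and the arguments of $f$ on the stated form is the delicate point. As a consistency check I would set $t=T$ (resp. $s=T$): the convolution then integrates over a degenerate interval and vanishes, and $S^{-1}(0)=I$ returns $\psi(s)$ (resp. $\varphi(t)$), which confirms the homogeneous part and the boundary conditions; to confirm the source term itself I would differentiate the representation and verify directly that \eqref{eq:2} holds.
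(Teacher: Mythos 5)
Your proposal is correct and essentially identical to the paper's own proof: the same reflection $\tau=T-t$, $\xi=T-s$ reducing (\ref{eq:2}) to (\ref{eq:5}), the same application of Theorem 3 after noting that the reflection swaps $D_{1}$ and $D_{2}$, and the same shift of the integration variable so that the semigroup factor collapses to $S\left(\eta-T\right)$ over $\left[t,T\right]$ (resp. $\left[s,T\right]$).

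On the point you rightly flag as delicate: carrying out the computation lands on $-\int_{t}^{T}S\left(\eta-T\right)f\left(T+t-\eta,T+s-\eta\right)d\eta$ for $\left(t,s\right)\in D_{1}$ (equivalently $-\int_{t}^{T}S\left(t-\eta\right)f\left(\eta,\eta+s-t\right)d\eta$), \emph{not} on the stated $+\int_{t}^{T}S\left(\eta-T\right)f\left(\eta-t,\eta-s\right)d\eta$; this is exactly where the paper's own proof stops, and it is the form that reappears in the spectral expansion (\ref{eq:8}) used in Section 3, so the integral term in the theorem statement is a misprint (wrong sign and wrong arguments of $f$) and no change of variables will produce it --- you should aim for the former expression, not force your calculation toward the latter.
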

\begin{proof}
Now we put $\tau=T-t$ and $\xi=T-s$, then write

\[
u\left(t,s\right)=u\left(T-\tau,T-\xi\right):=v\left(\tau,\xi\right),
\]

the function $v\left(\tau,\xi\right):\left[0,T\right]\times\left[0,T\right]\to\mathcal{H}$
satisfies the problem (\ref{eq:5}), namely,

\[
\begin{cases}
v_{\tau}+v_{\xi}-\mathcal{A}v=F\left(\tau,\xi\right)\equiv-f\left(T-\tau,T-\xi\right), & \left(\tau,\xi\right)\in\left[0,T\right]\times\left[0,T\right],\\
v\left(0,\xi\right)=\psi_{1}\left(\xi\right)\equiv u\left(T,T-\xi\right), & \xi\in\left[0,T\right],\\
v\left(\tau,0\right)=\varphi_{1}\left(\tau\right)\equiv u\left(T-\tau,T\right), & \tau\in\left[0,T\right].
\end{cases}
\]

Using Theorem 3, the solution $v\left(\tau,\xi\right)$ can be presented
by

\[
v\left(\tau,\xi\right)=\begin{cases}
S^{-1}\left(\tau\right)\psi_{1}\left(\xi-\tau\right)+\int_{\xi-\tau}^{\xi}S\left(\eta-\xi\right)F\left(\tau-\xi+\eta,\eta\right)d\eta, & \left(\tau,\xi\right)\in D_{2},\\
S^{-1}\left(\xi\right)\varphi_{1}\left(\tau-\xi\right)+\int_{\tau-\xi}^{\tau}S\left(\eta-\tau\right)F\left(\eta,\eta+\xi-\tau\right)d\eta, & \left(\tau,\xi\right)\in D_{1}.
\end{cases}
\]

It follows that

\[
u\left(T-\tau,T-\xi\right)=\begin{cases}
S^{-1}\left(\tau\right)u\left(T,T-\xi+\tau\right)-\int_{\xi-\tau}^{\xi}S\left(\eta-\xi\right)f\left(T-\tau+\xi-\eta,T-\eta\right)d\eta, & \left(\tau,\xi\right)\in D_{2},\\
S^{-1}\left(\xi\right)u\left(T-\tau+\xi,T\right)-\int_{\tau-\xi}^{\tau}S\left(\eta-\tau\right)f\left(T-\eta,T-\eta-\xi+\tau\right)d\eta, & \left(\tau,\xi\right)\in D_{1}.
\end{cases}
\]

Hence, we obtain

\begin{eqnarray*}
u\left(t,s\right) & = & \begin{cases}
S^{-1}\left(T-t\right)\psi\left(T+s-t\right)-\int_{t-s}^{T-s}S\left(\eta-T+s\right)f\left(T+t-s-\eta,T-\eta\right)d\eta, & \left(t,s\right)\in D_{1},\\
S^{-1}\left(T-s\right)\varphi\left(T+t-s\right)-\int_{s-t}^{T-t}S\left(\eta-T+t\right)f\left(T-\eta,T+s-t-\eta\right)d\eta, & \left(t,s\right)\in D_{2},
\end{cases}\\
 & = & \begin{cases}
S^{-1}\left(T-t\right)\psi\left(T+s-t\right)-\int_{t}^{T}S\left(\zeta-T\right)f\left(T+t-\zeta,T+s-\zeta\right)d\zeta, & \left(t,s\right)\in D_{1},\\
S^{-1}\left(T-s\right)\varphi\left(T+t-s\right)-\int_{s}^{T}S\left(\zeta-T\right)f\left(T+t-\zeta,T+s-\zeta\right)d\zeta, & \left(t,s\right)\in D_{2},
\end{cases}
\end{eqnarray*}

which completes the proof.
\end{proof}
Now we return to the consideration of problem (\ref{eq:1}). All of
our results in this paper apply to more general problems, for which
the boundary conditions are generalized in Robin-type, for example,

\[
\alpha_{1}u\left(0,t,s\right)+\alpha_{2}u_{x}\left(0,t,s\right)=0,
\]

\[
\alpha_{3}u\left(\pi,t,s\right)+\alpha_{4}u_{x}\left(\pi,t,s\right)=0,
\]

or we can consider, in general, the operator equations with the self-adjoint
operator $\mathcal{A}$ having a discrete spectrum on an abstract
Hilbert space $\mathcal{H}$ and satisfying the condition that $-\mathcal{A}$
generates a compact contraction semi-group on $\mathcal{H}$, like
the problem (\ref{eq:2}) considered above. However, for the sake
of simplicity, we confine our attention to the problem (\ref{eq:1})
in which the homogeneous Dirichlet boundary conditions at the endpoints
of $\left[0,\pi\right]$ are given. In this problem, we have $\mathcal{H}=L^{2}\left(0,\pi\right)$
and $\mathcal{D}\left(\mathcal{A}\right)=H_{0}^{1}\left(0,\pi\right)\cap H^{2}\left(0,\pi\right)$,
so there exists an orthonormal basis of $L^{2}\left(0,\pi\right)$,
$\left\{ \phi_{n}\right\} _{n\in\mathbb{N}}$ satisfying (see e.g.
\cite[p. 181]{key-33})

\[
\phi_{n}\in H_{0}^{1}\left(0,\pi\right)\cap C^{\infty}\left(\left[0,\pi\right]\right),\quad\Delta\phi_{n}=-\lambda_{n}\phi_{n},\quad0<\lambda_{1}\le\lambda_{2}\le...\lim_{n\to\infty}\lambda_{n}=\infty.
\]

The Laplace operator thus has a discrete spectrum $\sigma\left(\mathcal{A}\right)=\left\{ \lambda_{n}\right\} _{n\ge1}$
with $\lambda_{n}=n^{2}$ and gives the orthonormal eigenbasis $\phi_{n}=\sqrt{\dfrac{2}{\pi}}\sin\left(nx\right)$
for $n\in\mathbb{N},n\ge1$. Then, thanks to those theorems above,
the solution has the form

\begin{equation}
u\left(x,t,s\right)=\begin{cases}
\sum_{n\ge1}\left(e^{\left(T-t\right)n^{2}}\psi_{n}\left(T+s-t\right)-\int_{t}^{T}e^{\left(T-\eta\right)n^{2}}f_{n}\left(T+t-\eta,T+s-\eta\right)d\eta\right)\sin\left(nx\right), & \left(t,s\right)\in D_{1},\\
\sum_{n\ge1}\left(e^{\left(T-s\right)n^{2}}\varphi_{n}\left(T+t-s\right)-\int_{s}^{T}e^{\left(T-\eta\right)n^{2}}f_{n}\left(T+t-\eta,T+s-\eta\right)d\eta\right)\sin\left(nx\right), & \left(t,s\right)\in D_{2},
\end{cases}\label{eq:8}
\end{equation}

where

\[
\varphi_{n}\left(t\right)=\frac{2}{\pi}\int_{0}^{\pi}\varphi\left(x,t\right)\sin\left(nx\right)dx,\quad\psi_{n}\left(s\right)=\frac{2}{\pi}\int_{0}^{\pi}\psi\left(x,s\right)\sin\left(nx\right)dx,\quad f_{n}\left(t,s\right)=\frac{2}{\pi}\int_{0}^{\pi}f\left(x,t,s\right)\sin\left(nx\right)dx.
\]

We can see that the instability is caused by all of the exponential
functions. In fact, let us see the case $\left(t,s\right)\in D_{1}$
in (\ref{eq:8}). Since the discrete spectrum increases monotonically
as $n$ tends to infinity, the rapid escalation of $e^{\left(T-t\right)n^{2}}$
and $e^{\left(T-\eta\right)n^{2}}$ is mainly the instability cause.
Even though these exact given functions $\left(\psi_{n},f_{n}\right)$
may tend to zero very fast, performing classical calculation is impossible.
It is because that the given data may be diffused by a variety of
reasons such as round-off errors, measurement errors. A small perturbation
in the data can arbitrarily generate a large error in the solution.
A regularization method is thus required.

\section{Theoretical results}

In this section, assuming that the problem has an exact solution $u$
satisfying various corresponding assumptions, we construct the regularized
solution depending continuously on the data such that converges to
the exact solution $u$ in some sense. Moreover, the accuracy of regularized
solution is estimated.

The solution of (\ref{eq:1}) can be given by

\begin{equation}
u\left(x,t,s\right)=\begin{cases}
\sum_{n\ge1}e^{\left(T-t\right)n^{2}}\left(\psi_{n}\left(T+s-t\right)-\int_{t}^{T}e^{\left(t-\eta\right)n^{2}}f_{n}\left(T+t-\eta,T+s-\eta\right)d\eta\right)\sin\left(nx\right), & \left(t,s\right)\in D_{1},\\
\sum_{n\ge1}e^{\left(T-s\right)n^{2}}\left(\varphi_{n}\left(T+t-s\right)-\int_{s}^{T}e^{\left(s-\eta\right)n^{2}}f_{n}\left(T+t-\eta,T+s-\eta\right)d\eta\right)\sin\left(nx\right), & \left(t,s\right)\in D_{2}.
\end{cases}
\end{equation}

We shall replace all instability terms by the better ones, particularly,
$\left(\varepsilon+e^{-pn^{2}}\right)^{\frac{t-T}{p}}$ and $\left(\varepsilon+e^{-pn^{2}}\right)^{\frac{s-T}{p}}$
where $p\ge1$ is a real number. Then, the regularized solution corresponding
to the exact data is

\begin{equation}
u^{\varepsilon}\left(x,t,s\right)=\sum_{n\ge1}\left(\varepsilon+e^{-pn^{2}}\right)^{\frac{t-T}{p}}\left(\psi_{n}\left(T+s-t\right)-\int_{t}^{T}e^{\left(t-\eta\right)n^{2}}f_{n}\left(T+t-\eta,T+s-\eta\right)d\eta\right)\sin\left(nx\right),\label{eq:10}
\end{equation}

for any $\left(t,s\right)\in D_{1}$, and

\begin{equation}
u^{\varepsilon}\left(x,t,s\right)=\sum_{n\ge1}\left(\varepsilon+e^{-pn^{2}}\right)^{\frac{s-T}{p}}\left(\varphi_{n}\left(T+t-s\right)-\int_{s}^{T}e^{\left(s-\eta\right)n^{2}}f_{n}\left(T+t-\eta,T+s-\eta\right)d\eta\right)\sin\left(nx\right),\label{eq:11-1}
\end{equation}

for any $\left(t,s\right)\in D_{2}$.

We also denote the regularized solution corresponding to the perturbed
data by

\begin{equation}
v^{\varepsilon}\left(x,t,s\right)=\sum_{n\ge1}\left(\varepsilon+e^{-pn^{2}}\right)^{\frac{t-T}{p}}\left(\psi_{n}^{\varepsilon}\left(T+s-t\right)-\int_{t}^{T}e^{\left(t-\eta\right)n^{2}}f_{n}\left(T+t-\eta,T+s-\eta\right)d\eta\right)\sin\left(nx\right),\label{eq:11}
\end{equation}

for any $\left(t,s\right)\in D_{1}$, and

\begin{equation}
v^{\varepsilon}\left(x,t,s\right)=\sum_{n\ge1}\left(\varepsilon+e^{-pn^{2}}\right)^{\frac{s-T}{p}}\left(\varphi_{n}^{\varepsilon}\left(T+t-s\right)-\int_{s}^{T}e^{\left(s-\eta\right)n^{2}}f_{n}\left(T+t-\eta,T+s-\eta\right)d\eta\right)\sin\left(nx\right),\label{eq:12}
\end{equation}

for any $\left(t,s\right)\in D_{2}$.

Now we shall show two elementary inequalities in the following lemmas.
\begin{lem}
For $0\le t\le T\le p$, we have

\[
\left(\varepsilon+e^{-n^{2}p}\right)^{\frac{t-T}{p}}\leq\varepsilon^{\frac{t-T}{p}}.
\]
\end{lem}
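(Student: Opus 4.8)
The plan is to reduce the claim to the elementary monotonicity of a power function. First I would record the two structural facts that drive everything: since $0\le t\le T$, the exponent satisfies $\frac{t-T}{p}\le 0$, and since $n\ge 1$ and $p\ge 1$ the perturbed base obeys $\varepsilon+e^{-n^{2}p}\ge\varepsilon>0$. Thus both quantities $\varepsilon$ and $\varepsilon+e^{-n^{2}p}$ lie in $(0,\infty)$, with the former no larger than the latter.

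Next I would exploit the sign of the exponent. For any fixed $a\le 0$ the map $x\mapsto x^{a}$ is nonincreasing on $(0,\infty)$; applying this with $a=\frac{t-T}{p}$ and the ordering $0<\varepsilon\le\varepsilon+e^{-n^{2}p}$ immediately yields $\left(\varepsilon+e^{-n^{2}p}\right)^{(t-T)/p}\le\varepsilon^{(t-T)/p}$, which is exactly the assertion.

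Alternatively, to avoid invoking monotonicity for a negative exponent directly, I would set $\beta=\frac{T-t}{p}\ge 0$ and rewrite the inequality, after taking reciprocals of the two positive numbers, as $\varepsilon^{\beta}\le\left(\varepsilon+e^{-n^{2}p}\right)^{\beta}$. The latter is just the monotonicity of $x\mapsto x^{\beta}$ for the nonnegative exponent $\beta$, applied to $\varepsilon\le\varepsilon+e^{-n^{2}p}$.

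As for difficulty, there is essentially no obstacle here: the only point requiring care is the bookkeeping of the sign of $(t-T)/p$, since reversing that sign would reverse the inequality. I note that the hypothesis $T\le p$ is in fact stronger than this particular bound requires; it merely guarantees $-1\le\frac{t-T}{p}\le 0$, and I would expect it to be used in a companion estimate rather than here, as the displayed inequality holds for any $t\le T$ and any $p>0$.
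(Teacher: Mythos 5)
Your proof is correct and is essentially the paper's own argument: the paper simply observes that $\varepsilon+e^{-n^{2}p}\ge\varepsilon$ and that the exponent $\frac{t-T}{p}$ is nonpositive, so the power reverses the inequality, exactly as you spell out via the monotonicity of $x\mapsto x^{a}$ for $a\le 0$. Your side remark that the hypothesis $T\le p$ is not needed for this particular bound is also accurate.
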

\begin{proof}
It is obvious that $\left(\varepsilon+e^{-n^{2}p}\right)^{\frac{t-T}{p}}\leq\varepsilon^{\frac{t-T}{p}}$
since $\varepsilon+e^{-n^{2}p}\ge\varepsilon$.\end{proof}
\begin{lem}
For all $x>0$, $0<\alpha<1$ we have 

\[
1-\left(x+1\right)^{-\alpha}\leq x^{1-\alpha}.
\]
\end{lem}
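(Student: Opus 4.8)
The plan is to prove the inequality by splitting the range of $x$ at the point $x=1$, since the two regimes call for different elementary bounds. For $x\geq 1$ the claim is essentially free: the left-hand side satisfies $1-(x+1)^{-\alpha}<1$ because $(x+1)^{-\alpha}>0$, while the right-hand side obeys $x^{1-\alpha}\geq 1$ owing to $x\geq 1$ and the positivity of the exponent $1-\alpha>0$. Chaining these gives the inequality on $[1,\infty)$ with room to spare, and no further work is needed there.

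The substantive part is the interval $0<x<1$. Here the first step would be to control $(x+1)^{-\alpha}$ from below using Bernoulli's inequality in the form valid for a negative exponent: since $-\alpha\in(-1,0)$, one has $(1+x)^{-\alpha}\geq 1-\alpha x$ for every $x>-1$, and subtracting from $1$ turns this into $1-(x+1)^{-\alpha}\leq\alpha x$. The second step is to absorb $\alpha x$ into $x^{1-\alpha}$: because $0<\alpha<1$ one has $\alpha x\leq x$, and because $0<x<1$ together with $1-\alpha<1$ one has $x\leq x^{1-\alpha}$ (for a base below one, lowering a positive exponent increases the value, as $a\mapsto x^{a}$ is decreasing). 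Combining the chain $1-(x+1)^{-\alpha}\leq\alpha x\leq x\leq x^{1-\alpha}$ closes this case.

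The point that needs attention, and the reason for the case split, is that no single one of these bounds works on all of $(0,\infty)$. The Bernoulli estimate $1-(x+1)^{-\alpha}\leq\alpha x$ is in fact valid for every $x>0$, but $\alpha x$ overtakes $x^{1-\alpha}$ once $x$ is large (precisely when $\alpha x^{\alpha}>1$), so it cannot finish the large-$x$ regime; conversely the crude bound $x^{1-\alpha}\geq 1$ only helps when $x\geq 1$. I therefore expect the main, rather modest, obstacle to be recognizing that the threshold $x=1$ is exactly where the two elementary arguments hand off, and carefully verifying the direction of $x\leq x^{1-\alpha}$ on $(0,1)$, which is opposite to the more familiar behavior for $x\geq 1$. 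A one-shot monotonicity argument on $f(x)=x^{1-\alpha}+(x+1)^{-\alpha}-1$ with $f(0)=0$ is tempting, but checking $f'\geq 0$ reduces to $(1-\alpha)(x+1)^{\alpha+1}\geq\alpha x^{\alpha}$, which itself branches on the sign of $2\alpha-1$; the two-case argument above is cleaner and I would present it instead.
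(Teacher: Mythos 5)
Your proof is correct, but it takes a genuinely different route from the paper's. You split at $x=1$: on $[1,\infty)$ you use the trivial bounds $1-(x+1)^{-\alpha}<1\leq x^{1-\alpha}$, and on $(0,1)$ you chain the negative-exponent Bernoulli inequality $(1+x)^{-\alpha}\geq 1-\alpha x$ with $\alpha x\leq x\leq x^{1-\alpha}$; every step checks out, including the direction of $x\leq x^{1-\alpha}$ for a base below one. The paper instead gives a single uniform argument valid for all $x>0$: starting from $x^{\alpha}<(x+1)^{\alpha}$ it writes $x=x^{1-\alpha}x^{\alpha}\leq x^{1-\alpha}(x+1)^{\alpha}$, hence $1+x\leq 1+x^{1-\alpha}(x+1)^{\alpha}\leq\bigl[1+x^{1-\alpha}(x+1)^{\alpha}\bigr]^{1/\alpha}$ (the last step because the bracket is $\geq1$ and $1/\alpha>1$); raising to the power $\alpha$ gives $(x+1)^{\alpha}-1\leq x^{1-\alpha}(x+1)^{\alpha}$, and dividing by $(x+1)^{\alpha}$ finishes. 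What the paper's argument buys is the absence of any case split and of any appeal to Bernoulli --- it needs only monotonicity of $t\mapsto t^{\alpha}$ and the fact that $A\leq A^{1/\alpha}$ for $A\geq1$. What yours buys is transparency: each regime is closed by a one-line elementary bound, at the cost of invoking Bernoulli for negative exponents and tracking where each estimate is strong enough, a handoff your closing remark correctly locates at $x=1$.
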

\begin{proof}
The proof of this lemma is based on the fact that $x^{\alpha}<\left(x+1\right)^{\alpha}$.
Therefore, we have

\begin{eqnarray*}
1+x & \leq & 1+x^{1-\alpha}\left(x+1\right)^{\alpha}\\
 & \leq & \left[1+x^{1-\alpha}\left(x+1\right)^{\alpha}\right]^{\frac{1}{\alpha}},
\end{eqnarray*}

which leads to

\begin{eqnarray*}
1-\left(x+1\right)^{-\alpha} & = & \dfrac{\left(x+1\right)^{\alpha}-1}{\left(x+1\right)^{\alpha}}\\
 & \leq & \dfrac{1+x^{1-\alpha}\left(x+1\right)^{\alpha}-1}{\left(x+1\right)^{\alpha}}\\
 & \leq & x^{1-\alpha}.
\end{eqnarray*}

\end{proof}
In the sequel, we only prove the case $\left(t,s\right)\in D_{1}$
in our main result because of the similarity. The results are about
the regularized solution depending continuously on the corresponding
data and the convergence of that solution to the exact solution. Now
we shall use two elementary lemmas above to support the proof of the
main results.
\begin{lem}
Under the conditions (A1), (A2), (A4) and assume that $\varphi\left(T\right)=\psi\left(T\right)$,
then the function $u^{\varepsilon}$ given by (\ref{eq:10})-(\ref{eq:11-1})
depends continuously on $\left(\varphi,\psi\right)$ in $L^{2}\left(0,\pi\right)$.\end{lem}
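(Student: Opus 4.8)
The plan is to read the claim as a Lipschitz estimate: for each fixed $\varepsilon>0$ and $p\ge1$ the map carrying the final data $\left(\varphi,\psi\right)$ to the regularized solution (\ref{eq:10})--(\ref{eq:11-1}) is Lipschitz in $L^{2}\left(0,\pi\right)$, which is exactly what continuous dependence means here. Restricting to $D_{1}$, I would compare the regularized solution $u^{\varepsilon}$ built from $\left(\varphi,\psi\right)$ with the regularized solution $v^{\varepsilon}$ built from the perturbed data $\left(\varphi^{\varepsilon},\psi^{\varepsilon}\right)$, as in (\ref{eq:10}) and (\ref{eq:11}). The first and decisive observation is that the source contribution $\int_{t}^{T}e^{\left(t-\eta\right)n^{2}}f_{n}\left(T+t-\eta,T+s-\eta\right)d\eta$ is the \emph{same} in both expansions, since it involves only the fixed source $f$ and not the final data; hence on subtracting it cancels termwise and
\[
u^{\varepsilon}\left(x,t,s\right)-v^{\varepsilon}\left(x,t,s\right)=\sum_{n\ge1}\left(\varepsilon+e^{-pn^{2}}\right)^{\frac{t-T}{p}}\left(\psi_{n}\left(T+s-t\right)-\psi_{n}^{\varepsilon}\left(T+s-t\right)\right)\sin\left(nx\right).
\]
Thus only the data discrepancy survives, weighted by the regularizing factor, and the interior exponential $e^{\left(t-\eta\right)n^{2}}$ (which is in fact harmless, its exponent being nonpositive for $\eta\in\left[t,T\right]$) plays no role.

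Next I would invoke the orthogonality $\int_{0}^{\pi}\sin\left(nx\right)\sin\left(mx\right)dx=\frac{\pi}{2}\delta_{nm}$ to pass to coefficients, obtaining
\[
\left\Vert u^{\varepsilon}-v^{\varepsilon}\right\Vert ^{2}=\frac{\pi}{2}\sum_{n\ge1}\left(\varepsilon+e^{-pn^{2}}\right)^{\frac{2\left(t-T\right)}{p}}\left|\psi_{n}\left(T+s-t\right)-\psi_{n}^{\varepsilon}\left(T+s-t\right)\right|^{2}.
\]
The crux is to bound the weight by an $n$-independent constant: since $0\le t\le T\le p$ the exponent $\frac{t-T}{p}$ is nonpositive and the base exceeds $\varepsilon$, so the first elementary lemma above gives $\left(\varepsilon+e^{-pn^{2}}\right)^{\frac{t-T}{p}}\le\varepsilon^{\frac{t-T}{p}}$. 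Pulling the resulting factor $\varepsilon^{\frac{2\left(t-T\right)}{p}}$ out of the sum and converting the remaining series back to an $L^{2}$ norm by the same Parseval identity, I expect to reach
\[
\left\Vert u^{\varepsilon}-v^{\varepsilon}\right\Vert \le\varepsilon^{\frac{t-T}{p}}\left\Vert \psi-\psi^{\varepsilon}\right\Vert .
\]
Because $\varepsilon^{\frac{t-T}{p}}$ is finite for each fixed $\varepsilon>0$, this is precisely the continuous (indeed Lipschitz) dependence asserted; substituting the bound $\left\Vert \psi-\psi^{\varepsilon}\right\Vert \le\varepsilon$ then yields the quantitative form $\left\Vert u^{\varepsilon}-v^{\varepsilon}\right\Vert \le\varepsilon^{\frac{p+t-T}{p}}$ that will be used in the later error analysis.

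I do not expect a real analytic obstacle: the argument is two applications of Parseval wrapped around a single use of the first elementary inequality. The points that require care are purely organizational. First, the normalization constant $\frac{\pi}{2}$ must be tracked consistently, since the coefficients $\psi_{n}$ are defined with the factor $\frac{2}{\pi}$ while the orthonormal eigenfunctions are $\sqrt{2/\pi}\sin\left(nx\right)$; fortunately this constant cancels between the two Parseval steps. Second, one must verify the hypothesis of the first lemma, namely that the exponent is nonpositive, which is where the standing assumption $t\le T\le p$ enters. The conditions (A1), (A2), (A4) and $\varphi\left(T\right)=\psi\left(T\right)$ are used only to guarantee, via the representation theorem, that (\ref{eq:8}) genuinely represents the exact solution and hence that the regularized object (\ref{eq:10}) is well defined; they take no active part in the estimate. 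Finally, the region $D_{2}$ is handled identically upon interchanging the roles of $\psi$ and $\varphi$ and of $t$ and $s$, using (\ref{eq:11-1}) and (\ref{eq:12}).
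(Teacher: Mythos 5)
Your proposal is correct and follows essentially the same route as the paper's own proof: subtract the two regularized expansions so the source term cancels, apply the Parseval relation (with the $\frac{\pi}{2}$ normalization), bound the weight $\left(\varepsilon+e^{-pn^{2}}\right)^{\frac{t-T}{p}}$ by $\varepsilon^{\frac{t-T}{p}}$ via the first elementary lemma, and convert back to the $L^{2}$ norm, treating $D_{2}$ symmetrically. The only cosmetic difference is that the paper states the estimate for two arbitrary data pairs $\left(\varphi^{1},\psi^{1}\right)$, $\left(\varphi^{2},\psi^{2}\right)$ rather than for the exact--perturbed pair, which does not change the argument.
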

\begin{proof}
Let $u_{1}^{\varepsilon}$ and $u_{2}^{\varepsilon}$ be two solutions
of (\ref{eq:10})-(\ref{eq:11-1}) corresponding to the data $\left(\varphi^{1},\psi^{1}\right)$
and $\left(\varphi^{2},\psi^{2}\right)$, respectively. By using Parseval
relation, for $\left(t,s\right)\in D_{1}$ we have

\begin{eqnarray*}
\left\Vert u_{1}^{\varepsilon}\left(\cdot,t,s\right)-u_{2}^{\varepsilon}\left(\cdot,t,s\right)\right\Vert ^{2} & = & \frac{\pi}{2}\sum_{n\ge1}\left(\varepsilon+e^{-pn^{2}}\right)^{\frac{2\left(t-T\right)}{p}}\left(\psi_{n}^{1}\left(T+s-t\right)-\psi_{n}^{2}\left(T+s-t\right)\right)^{2}\\
 & \le & \varepsilon^{\frac{2\left(t-T\right)}{p}}\left\Vert \psi^{1}\left(T+s-t\right)-\psi^{2}\left(T+s-t\right)\right\Vert ^{2}.
\end{eqnarray*}

Similarly, for any $\left(t,s\right)\in D_{2}$, we get

\begin{eqnarray*}
\left\Vert u_{1}^{\varepsilon}\left(\cdot,t,s\right)-u_{2}^{\varepsilon}\left(\cdot,t,s\right)\right\Vert ^{2} & = & \frac{\pi}{2}\sum_{n\ge1}\left(\varepsilon+e^{-pn^{2}}\right)^{\frac{2\left(s-T\right)}{p}}\left(\varphi_{n}^{1}\left(T+t-s\right)-\varphi_{n}^{2}\left(T+t-s\right)\right)^{2}\\
 & \le & \varepsilon^{\frac{2\left(s-T\right)}{p}}\left\Vert \varphi^{1}\left(T+t-s\right)-\varphi^{2}\left(T+t-s\right)\right\Vert ^{2}.
\end{eqnarray*}
\end{proof}
\begin{thm}
Under the conditions (A1), (A2) and (A4), if the problem (\ref{eq:1})
with $\varphi\left(T\right)=\psi\left(T\right)$ admits a unique solution
$u$ satisfying

\begin{equation}
{\displaystyle \dfrac{\pi}{2}\sup_{\left(t,s\right)\in D_{1}}\sum_{n=1}^{\infty}e^{2\left(p+t-T\right)n^{2}}\left|u_{n}\left(t,s\right)\right|^{2}}<C_{1},\label{eq:14-1}
\end{equation}

and

\begin{equation}
{\displaystyle \dfrac{\pi}{2}\sup_{\left(t,s\right)\in D_{2}}\sum_{n=1}^{\infty}e^{2\left(p+s-T\right)n^{2}}\left|u_{n}\left(t,s\right)\right|^{2}}<C_{2},\label{eq:15}
\end{equation}

where ${\displaystyle u_{n}\left(t,s\right)=\int_{0}^{\pi}u\left(x,t,s\right)\sin\left(nx\right)dx}$,
let $\left(\varphi^{\varepsilon},\psi^{\varepsilon}\right)$ be perturbed
functions satisfying the conditions (A1)-(A2), respectively, and let
$v^{\varepsilon}$ be the regularized solution, given by (\ref{eq:11})-(\ref{eq:12}),
corresponding to the perturbed data $\left(\varphi^{\varepsilon},\psi^{\varepsilon}\right)$,
then  for $\left(t,s\right)\in D_{1}$ we have

\[
\left\Vert v^{\epsilon}\left(\cdot,t,s\right)-u\left(\cdot,t,s\right)\right\Vert \le\left(1+\sqrt{C_{1}}\right)\epsilon^{\frac{t-T+p}{p}},
\]

and for $\left(t,s\right)\in D_{2}$,

\[
\left\Vert v^{\epsilon}\left(\cdot,t,s\right)-u\left(\cdot,t,s\right)\right\Vert \le\left(1+\sqrt{C_{2}}\right)\epsilon^{\frac{s-T+p}{p}}.
\]
\end{thm}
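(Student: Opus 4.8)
The plan is to introduce the regularized solution built from the \emph{exact} data, $u^{\varepsilon}$ of (\ref{eq:10})--(\ref{eq:11-1}), as an intermediary and split the error by the triangle inequality
$$\left\Vert v^{\varepsilon}(\cdot,t,s)-u(\cdot,t,s)\right\Vert \le \left\Vert v^{\varepsilon}(\cdot,t,s)-u^{\varepsilon}(\cdot,t,s)\right\Vert + \left\Vert u^{\varepsilon}(\cdot,t,s)-u(\cdot,t,s)\right\Vert.$$
The first summand measures the effect of the data noise and is controlled exactly as in the preceding continuous-dependence lemma; the second measures the regularization bias and is where the source assumption (\ref{eq:14-1}) enters. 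I treat only $(t,s)\in D_{1}$, the case $D_{2}$ being identical after exchanging the roles of $\psi,t$ with $\varphi,s$ and using (\ref{eq:15}).

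For the first summand I would note that $v^{\varepsilon}-u^{\varepsilon}$ involves only the difference $\psi_{n}^{\varepsilon}-\psi_{n}$, since the source integrals in (\ref{eq:10}) and (\ref{eq:11}) cancel. Applying the Parseval relation just as in the continuous-dependence lemma gives
$$\left\Vert v^{\varepsilon}(\cdot,t,s)-u^{\varepsilon}(\cdot,t,s)\right\Vert^{2}=\frac{\pi}{2}\sum_{n\ge1}\left(\varepsilon+e^{-pn^{2}}\right)^{\frac{2(t-T)}{p}}\left(\psi_{n}^{\varepsilon}(T+s-t)-\psi_{n}(T+s-t)\right)^{2}.$$
Since $\tfrac{t-T}{p}\le0$, the first elementary lemma (that $(\varepsilon+e^{-pn^{2}})^{(t-T)/p}\le\varepsilon^{(t-T)/p}$) pulls the factor out of the sum, and the data bound $\left\Vert \psi^{\varepsilon}-\psi\right\Vert \le\varepsilon$ yields $\left\Vert v^{\varepsilon}-u^{\varepsilon}\right\Vert \le \varepsilon^{(t-T)/p}\cdot\varepsilon=\varepsilon^{(t-T+p)/p}$, which accounts for the summand $1$ in the constant $1+\sqrt{C_{1}}$.

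For the second summand I would first use the exact representation of $u$ to rewrite the bracket $\psi_{n}(T+s-t)-\int_{t}^{T}e^{(t-\eta)n^{2}}f_{n}\,d\eta$ appearing in (\ref{eq:10}) in terms of the Fourier coefficient $u_{n}(t,s)$, namely this bracket equals $e^{(t-T)n^{2}}$ times a fixed multiple of $u_{n}(t,s)$. Substituting this into $u^{\varepsilon}-u$ makes the divergent factor $e^{(T-t)n^{2}}$ combine with $(\varepsilon+e^{-pn^{2}})^{(t-T)/p}$ into the bounded modal multiplier
$$\left|1-\left(\frac{e^{-pn^{2}}}{\varepsilon+e^{-pn^{2}}}\right)^{\frac{T-t}{p}}\right|=1-\left(1+\varepsilon e^{pn^{2}}\right)^{-\frac{T-t}{p}}.$$
Setting $x=\varepsilon e^{pn^{2}}>0$ and $\alpha=\tfrac{T-t}{p}\in(0,1)$, the second elementary lemma (that $1-(x+1)^{-\alpha}\le x^{1-\alpha}$) bounds this by $x^{1-\alpha}=\varepsilon^{(t-T+p)/p}\,e^{(p+t-T)n^{2}}$. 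Feeding this modal estimate back through Parseval, the factor $\varepsilon^{(t-T+p)/p}$ comes out of the sum and what remains is precisely $\sum_{n}e^{2(p+t-T)n^{2}}\left|u_{n}(t,s)\right|^{2}$, which assumption (\ref{eq:14-1}) bounds by $C_{1}$ (up to the Parseval normalization constant). Hence $\left\Vert u^{\varepsilon}-u\right\Vert \le\sqrt{C_{1}}\,\varepsilon^{(t-T+p)/p}$, and adding the two estimates gives the claim.

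The \textbf{main obstacle} is the bookkeeping in the second summand: correctly identifying the bracket with $u_{n}(t,s)$ so that the blow-up $e^{(T-t)n^{2}}$ is absorbed, and then recasting the modal multiplier into the exact shape $1-(x+1)^{-\alpha}$ so that the second elementary lemma applies with $\alpha=(T-t)/p$. One must check the admissibility $0<\alpha<1$ (the endpoints $t=T$, where the multiplier vanishes, and $T=p$ with $t=0$ being handled trivially) and keep the exponent arithmetic $p(1-\alpha)=p+t-T$ straight, so that the weight appearing after Parseval matches the one in (\ref{eq:14-1}) verbatim. By contrast, the stability summand is routine once the continuous-dependence lemma and the first elementary lemma are in hand.
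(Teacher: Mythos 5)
Your proposal is correct and follows essentially the same route as the paper's own proof: the same triangle-inequality split through the exact-data regularized solution $u^{\varepsilon}$, the same stability bound via Parseval and the first elementary lemma, and the same identification $u_{n}^{\varepsilon}=\left(1+\varepsilon e^{pn^{2}}\right)^{\frac{t-T}{p}}u_{n}$ so that the second elementary lemma with $x=\varepsilon e^{pn^{2}}$, $\alpha=\frac{T-t}{p}$ produces exactly the weight $e^{2\left(p+t-T\right)n^{2}}$ absorbed by assumption (\ref{eq:14-1}). Your extra remarks on the endpoint cases $\alpha\in\left\{0,1\right\}$ are a minor refinement the paper leaves implicit.
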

\begin{proof}
For any $\left(t,s\right)\in D_{1}$, we have

\[
u\left(x,t\right)=\sum_{n\ge1}e^{\left(T-t\right)n^{2}}\left(\psi_{n}\left(T+s-t\right)-\int_{t}^{T}e^{\left(t-\eta\right)n^{2}}f_{n}\left(T+t-\eta,T+s-\eta\right)d\eta\right)\sin\left(nx\right),
\]

\[
u^{\varepsilon}\left(x,t,s\right)=\sum_{n\ge1}\left(\varepsilon+e^{-pn^{2}}\right)^{\frac{t-T}{p}}\left(\psi_{n}\left(T+s-t\right)-\int_{t}^{T}e^{\left(t-\eta\right)n^{2}}f_{n}\left(T+t-\eta,T+s-\eta\right)d\eta\right)\sin\left(nx\right),
\]

\[
v^{\varepsilon}\left(x,t,s\right)=\sum_{n\ge1}\left(\varepsilon+e^{-pn^{2}}\right)^{\frac{t-T}{p}}\left(\psi_{n}^{\varepsilon}\left(T+s-t\right)-\int_{t}^{T}e^{\left(t-\eta\right)n^{2}}f_{n}\left(T+t-\eta,T+s-\eta\right)d\eta\right)\sin\left(nx\right).
\]

Using triangle inequality, in order to get the error estimate, we
have to estimate $\left\Vert v^{\varepsilon}\left(\cdot,t,s\right)-u^{\varepsilon}\left(\cdot,t,s\right)\right\Vert $
and $\left\Vert u^{\varepsilon}\left(\cdot,t,s\right)-u\left(\cdot,t,s\right)\right\Vert $.
Indeed, we get

\begin{eqnarray}
\left\Vert v^{\varepsilon}\left(\cdot,t,s\right)-u^{\varepsilon}\left(\cdot,t,s\right)\right\Vert ^{2} & = & \frac{\pi}{2}\sum_{n\ge1}\left(\varepsilon+e^{-pn^{2}}\right)^{\frac{2\left(t-T\right)}{p}}\left(\psi_{n}^{\varepsilon}\left(T+s-t\right)-\psi_{n}\left(T+s-t\right)\right)^{2}\nonumber \\
 & \le & \varepsilon^{\frac{2\left(t-T\right)}{p}}\left\Vert \psi^{\varepsilon}\left(T+s-t\right)-\psi\left(T+s-t\right)\right\Vert ^{2}\nonumber \\
 & \le & \varepsilon^{\frac{2\left(t-T+p\right)}{p}}.\label{eq:16}
\end{eqnarray}

Next, $\left\Vert u^{\varepsilon}\left(\cdot,t,s\right)-u\left(\cdot,t,s\right)\right\Vert $
can be estimated as follows. We put

\[
u_{n}\left(t,s\right)={\displaystyle e^{\left(T-t\right)n^{2}}}\psi_{n}\left(T+s-t\right)-\int_{t}^{T}e^{\left(T-\eta\right)n^{2}}f_{n}\left(T+t-\eta,T+s-\eta\right)d\eta,
\]

then we have

\begin{eqnarray*}
\left(1+\varepsilon e^{n^{2}p}\right)^{\frac{t-T}{p}}u_{n}\left(t,s\right) & = & \left(\left(1+\varepsilon e^{n^{2}p}\right){\displaystyle e^{-pn^{2}}}\right)^{\frac{t-T}{p}}\psi_{n}\left(T+s-t\right)\\
 &  & -\int_{t}^{T}\left(1+\varepsilon e^{n^{2}p}\right)^{\frac{t-T}{p}}e^{\left(T-t\right)n^{2}}e^{\left(t-\eta\right)n^{2}}f_{n}\left(T+t-\eta,T+s-\eta\right)d\eta\\
 & = & \left(\varepsilon+e^{-pn^{2}}\right)^{\frac{t-T}{p}}\psi_{n}\left(T+s-t\right)\\
 &  & -\int_{t}^{T}\left(\varepsilon+e^{-pn^{2}}\right)^{\frac{t-T}{p}}e^{\left(t-\eta\right)n^{2}}f_{n}\left(T+t-\eta,T+s-\eta\right)d\eta.
\end{eqnarray*}

Therefore, we conclude that

\[
\left(\varepsilon+e^{-pn^{2}}\right)^{\frac{t-T}{p}}\left(\psi_{n}\left(T+s-t\right)-\int_{t}^{T}e^{\left(t-\eta\right)n^{2}}f_{n}\left(T+t-\eta,T+s-\eta\right)d\eta\right)\equiv u_{n}^{\varepsilon}\left(t,s\right)=\left(1+\varepsilon e^{n^{2}p}\right)^{\frac{t-T}{p}}u_{n}\left(t,s\right).
\]

Now using Parseval relation again, we thus obtain

\[
\left\Vert u^{\varepsilon}\left(\cdot,t,s\right)-u\left(\cdot,t,s\right)\right\Vert ^{2}=\frac{\pi}{2}\sum_{n\ge1}\left|u_{n}^{\varepsilon}\left(t,s\right)-u_{n}\left(t,s\right)\right|^{2}=\frac{\pi}{2}\sum_{n\ge1}\left(1-\left(1+\varepsilon e^{n^{2}p}\right)^{\frac{t-T}{p}}\right)^{2}\left|u_{n}\left(t,s\right)\right|^{2}.
\]

Thanks to Lemma 6 and the assumption (\ref{eq:14-1}), we have

\begin{equation}
\left\Vert u^{\varepsilon}\left(\cdot,t,s\right)-u\left(\cdot,t,s\right)\right\Vert ^{2}\le\frac{\pi}{2}\sum_{n\ge1}\left(\varepsilon e^{n^{2}p}\right)^{2-\frac{2\left(T-t\right)}{p}}\left|u_{n}\left(t,s\right)\right|^{2}\le\varepsilon^{2\left(1+\frac{t-T}{p}\right)}C_{1}.\label{eq:17}
\end{equation}

Combining (\ref{eq:16})-(\ref{eq:17}), we obtain

\begin{eqnarray*}
\left\Vert v^{\epsilon}\left(\cdot,t,s\right)-u\left(\cdot,t,s\right)\right\Vert  & \le & \left\Vert v^{\epsilon}\left(\cdot,t,s\right)-u^{\varepsilon}\left(\cdot,t,s\right)\right\Vert +\left\Vert u^{\epsilon}\left(\cdot,t,s\right)-u\left(\cdot,t,s\right)\right\Vert \\
 & \le & \varepsilon^{\frac{t-T+p}{p}}+\varepsilon^{1+\frac{t-T}{p}}\sqrt{C_{1}}\le\left(1+\sqrt{C_{1}}\right)\varepsilon^{\frac{t-T+p}{p}}.
\end{eqnarray*}

Similarly, we obtain the error estimate

\[
\left\Vert v^{\epsilon}\left(\cdot,t,s\right)-u\left(\cdot,t,s\right)\right\Vert \le\left(1+\sqrt{C_{2}}\right)\varepsilon^{\frac{s-T+p}{p}},
\]

for the case $\left(t,s\right)\in D_{2}$ with the assumption (\ref{eq:15}).

Hence, we complete the proof.\end{proof}
\begin{rem}
From Theorem 8, we can see that $v^{\varepsilon}\left(\cdot,t,s\right)$
strongly converges to $u\left(\cdot,t,s\right)$ in $L^{2}\left(0,\pi\right)$
for any $\left(t,s\right)\in\left[0,T\right]\times\left[0,T\right]$
as $\varepsilon$ tends to zero. One advantage of this method is that
the endpoints of time $\left[0,T\right]\times\left[0,T\right]$, for
example, $\left(t,s\right)=\left(0,0\right)$ and $\left(t,s\right)=\left(T,T\right)$
nearly have the same rate of convergence in some cases. Indeed, the
convergence speed at $\left(t,s\right)=\left(0,0\right)$ is $\varepsilon^{\frac{p-T}{p}}$
and it is of order $\varepsilon$ for $\left(t,s\right)=\left(T,T\right)$.
Then, if $p$ is very large for any fixed $T>0$, the order $\varepsilon^{\frac{p-T}{p}}$
may approach $\varepsilon$. This creates the globally stability behavior
of the error in numerical sense. On the other hand, the natural acceptance
of (\ref{eq:14-1})-(\ref{eq:15}) can be obtained at $\left(t,s\right)=\left(0,0\right)$.
Namely, by letting $p=T$ the conditions become

\[
{\displaystyle \dfrac{\pi}{2}\sum_{n=1}^{\infty}\left|u_{n}\left(0,0\right)\right|^{2}}=\left\Vert u\left(\cdot,0,0\right)\right\Vert ^{2}.
\]

Moreover, the error is of order $\mathcal{O}\left(\varepsilon^{\frac{p-T}{p}}\right)$
for all $\left(t,s\right)\in\left[0,T\right]\times\left[0,T\right]$.
If $p>T$, this error is faster than the order $\ln\left(\varepsilon^{-1}\right)^{-q},q>0$
as $\varepsilon\to0$ which is studied in many works, such as \cite{key-6,key-14,key-15,key-29}.
Combining the strong points above, the reader can infer that our method
is feasible.
\end{rem}

\section{A numerical example}
In order to see how well the method works, we consider as an example
the problem (\ref{eq:1}) by choosing

\[
f\left(x,t,s\right)=-2e^{-2t-s}\sin x,\quad\psi\left(x,s\right)=e^{-2-s}\sin x,\quad\varphi\left(x,t\right)=e^{-2t-1}\sin x,
\]
 and the domain $\left[0,\pi\right]\times\left[0,1\right]^{2}$. For
these given functions, the problem has a unique solution

\begin{equation}
u_{ex}\left(x,t,s\right)=e^{-2t-s}\sin x.\label{eq:13-1}
\end{equation}

Now let us take perturbation on data functions as follows. For $m\in\mathbb{N}$,
we define

\[
\varphi_{m}\left(x,t\right)=e^{-2t-1}\sin x+\frac{\sin\left(mx\right)}{m},
\]

\[
\psi_{m}\left(x,s\right)=e^{-s-2}\sin x+\frac{\sin\left(mx\right)}{m}.
\]

Thus, the solution corresponding to the perturbed data functions is

\begin{eqnarray*}
u_{m}\left(x,t,s\right) & = & \begin{cases}
e^{-s-2}\sin x+\frac{e^{\left(1-t\right)m^{2}}}{m}\sin\left(mx\right)\\
+2\int_{t}^{1}e^{1-\eta}e^{-2\left(1+t-\eta\right)-\left(1+s-\eta\right)}\sin xd\eta, & \left(t,s\right)\in D_{1},\\
e^{-2-2t+s}\sin x+\frac{e^{\left(1-s\right)m^{2}}}{m}\sin\left(mx\right)\\
+2\int_{s}^{1}e^{1-\eta}e^{-2\left(1+t-\eta\right)-\left(1+s-\eta\right)}\sin xd\eta, & \left(t,s\right)\in D_{2}.
\end{cases}\\
 & = & \begin{cases}
e^{-s-2}\sin x+\frac{e^{\left(1-t\right)m^{2}}}{m}\sin\left(mx\right)+e^{-2t-s-2}\left(e^{2}-e^{2t}\right)\sin x, & \left(t,s\right)\in D_{1},\\
e^{-2-2t+s}\sin x+\frac{e^{\left(1-s\right)m^{2}}}{m}\sin\left(mx\right)+e^{-2t-s-2}\left(e^{2}-e^{2s}\right)\sin x, & \left(t,s\right)\in D_{2}.
\end{cases}
\end{eqnarray*}

It is easy to see that $\left(\varphi_{m},\psi_{m}\right)$ converges
to $\left(\varphi,\psi\right)$ over the norm $L^{2}\left(0,\pi\right)$
as $m\to\infty$. To observe the ill-posedness, we can compute, for
example, $u_{ex}\left(x,\dfrac{1}{2},\dfrac{1}{2}\right)=e^{-\frac{3}{2}}\sin x$
and

\[
u_{m}\left(x,\frac{1}{2},\frac{1}{2}\right)=e^{\frac{-3}{2}}\sin x+\frac{e^{\frac{m^{2}}{2}}}{m}\sin\left(mx\right).
\]

Therefore, we get

\[
\left\Vert u_{m}\left(\cdot,\frac{1}{2},\frac{1}{2}\right)-u\left(\cdot,\frac{1}{2},\frac{1}{2}\right)\right\Vert ^{2}=\int_{0}^{\pi}\frac{e^{m^{2}}}{m^{2}}\sin^{2}\left(mx\right)dx=\frac{\pi e^{m^{2}}}{2m^{2}}\to\infty,
\]

as $m\to\infty$. This divergence is also showed in Figure 1 with
$m=2$ and $m=3$.

\begin{figure}
\begin{centering}
\includegraphics[scale=0.6]{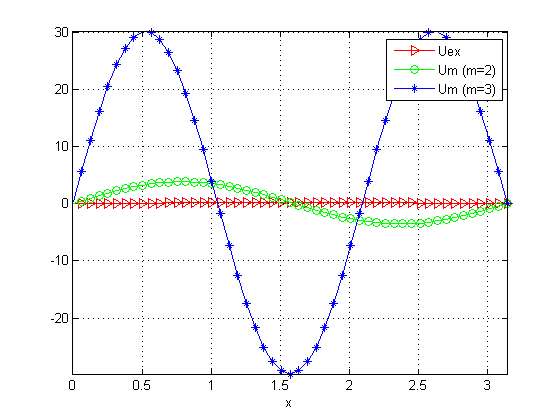}
\par\end{centering}

\centering{}\protect\caption{The exact solution $u_{ex}$ and the approximation solution within
perturbations $u_{m}$.}
\end{figure}

\begin{table}[h!]
\protect\caption{Comparison of absolute errors between the regularized solutions $v_{m}$
of $m=10^{2}$ and $m=10^{10}$.}
\begin{tabular}{cccccc}
\hline 
\multirow{2}{*}{$\left(x,t,s\right)$} & \multirow{2}{*}{Exact value} & App. value 1 & App. value 2 & \multirow{2}{*}{Abs. error 1} & \multirow{2}{*}{Abs. error 2}\tabularnewline
 &  &  ($m=10^{2}$) & ($m=10^{10}$) &  & \tabularnewline
\hline 
$\left(\dfrac{\pi}{2},0.75,0.75\right)$ & 0.1053992246 & 0.0915741799 & 0.1053992172 & 0.0138250446 & 7.4E-09\tabularnewline
\hline 
$\left(\dfrac{\pi}{2},0.5,0.5\right)$ & 0.2231301601 & 0.1684339068 & 0.2231301293 & 0.0546962533 & 3.08E-08\tabularnewline
\hline 
$\left(\dfrac{\pi}{2},0.25,0.25\right)$ & 0.4723665527 & 0.3098032761 & 0.4723664549 & 0.1625632766 & 9.87E-08\tabularnewline
\hline 
$\left(\dfrac{\pi}{2},0.125,0.125\right)$ & 0.6872892788 & 0.4201595585 & 0.6872891127 & 0.2671297203 & 1.661E-07\tabularnewline
\hline 
$\left(\dfrac{\pi}{2},0,0\right)$ & 1 & 0.5698263001 & 0.9999997239 & 0.4301736999 & 2.761E-07\tabularnewline
\hline 
\end{tabular}
\end{table}

\begin{figure}
\begin{centering}
\subfloat[Exact]{\begin{centering}
\includegraphics[scale=0.6]{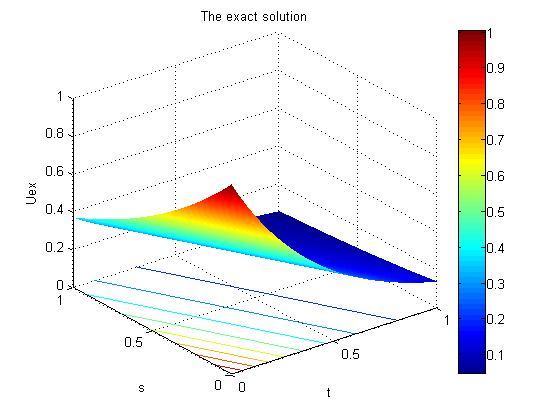}
\par\end{centering}

} 
\par\end{centering}

\begin{centering}
\subfloat[Regularized ($m=10^{10}$)]{\begin{centering}
\includegraphics[scale=0.6]{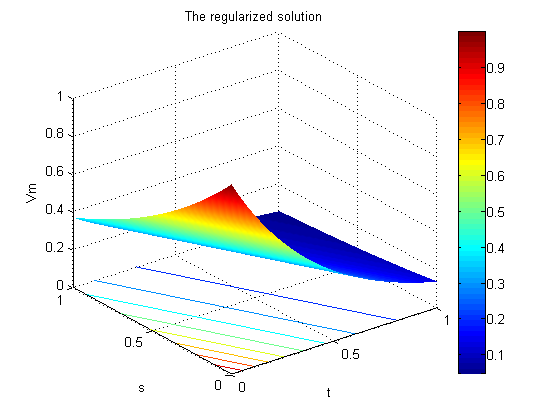}
\par\end{centering}

}
\par\end{centering}

\protect\caption{Plot of the exact and regularized solutions at the midpoint of $\left[0,\pi\right]$.}
\end{figure}

\begin{figure}
\begin{centering}
\includegraphics[scale=0.6]{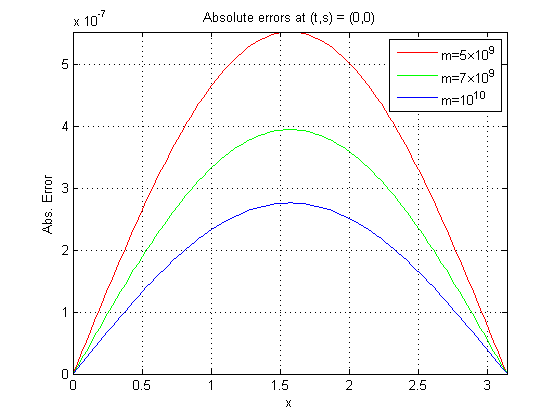}
\par\end{centering}

\protect\caption{Plot of absolute errors at the endpoint of time $\left(t,s\right)=\left(0,0\right)$.}
\end{figure}

Now we compute the regularized solution based on (\ref{eq:11})-(\ref{eq:12})
as follows.

\begin{eqnarray}
v_{m}\left(x,t,s\right) & = & \left(\sqrt{\frac{\pi}{2}}\frac{1}{m}+e^{-p}\right)^{\frac{t-1}{p}}e^{-3-s+t}\sin x+\left(\sqrt{\frac{\pi}{2}}\frac{1}{m}+e^{-pm^{2}}\right)^{\frac{t-1}{p}}\frac{\sin\left(mx\right)}{m}\nonumber \\
 &  & +2\left(\sqrt{\frac{\pi}{2}}\frac{1}{m}+e^{-p}\right)^{\frac{t-1}{p}}\int_{t}^{1}e^{t-\eta}e^{-2\left(1+t-\eta\right)-\left(1+s-\eta\right)}\sin xd\eta\nonumber \\
 & = & \left(\sqrt{\frac{\pi}{2}}\frac{1}{m}+e^{-p}\right)^{\frac{t-1}{p}}\left(e^{-3-s+t}+e^{-3-t-s}\left(e^{2}-e^{2t}\right)\right)\sin x\nonumber \\
 &  & +\left(\sqrt{\frac{\pi}{2}}\frac{1}{m}+e^{-pm^{2}}\right)^{\frac{t-1}{p}}\frac{\sin\left(mx\right)}{m}\nonumber \\
 & = & \left(\sqrt{\frac{\pi}{2}}\frac{1}{m}+e^{-p}\right)^{\frac{t-1}{p}}e^{-1-t-s}\sin x+\left(\sqrt{\frac{\pi}{2}}\frac{1}{m}+e^{-pm^{2}}\right)^{\frac{t-1}{p}}\frac{\sin\left(mx\right)}{m},\label{eq:13}
\end{eqnarray}

for any $\left(t,s\right)\in D_{1}$, and

\begin{eqnarray}
v_{m}\left(x,t,s\right) & = & \left(\sqrt{\frac{\pi}{2}}\frac{1}{m}+e^{-p}\right)^{\frac{s-1}{p}}e^{-3-2t+2s}\sin x+\left(\sqrt{\frac{\pi}{2}}\frac{1}{m}+e^{-pm^{2}}\right)^{\frac{s-1}{p}}\frac{\sin\left(mx\right)}{m},\nonumber \\
 &  & +2\left(\sqrt{\frac{\pi}{2}}\frac{1}{m}+e^{-p}\right)^{\frac{s-1}{p}}\int_{s}^{1}e^{s-\eta}e^{-2\left(1+t-\eta\right)-\left(1+s-\eta\right)}\sin xd\eta\nonumber \\
 & = & \left(\sqrt{\frac{\pi}{2}}\frac{1}{m}+e^{-p}\right)^{\frac{s-1}{p}}\left(e^{-3-2t+2s}+e^{-3-2t}\left(e^{2}-e^{2s}\right)\right)\sin x\nonumber \\
 &  & +\left(\sqrt{\frac{\pi}{2}}\frac{1}{m}+e^{-pm^{2}}\right)^{\frac{s-1}{p}}\frac{\sin\left(mx\right)}{m}\nonumber \\
 & = & \left(\sqrt{\frac{\pi}{2}}\frac{1}{m}+e^{-p}\right)^{\frac{s-1}{p}}e^{-1-2t}\sin x+\left(\sqrt{\frac{\pi}{2}}\frac{1}{m}+e^{-pm^{2}}\right)^{\frac{s-1}{p}}\frac{\sin\left(mx\right)}{m},\label{eq:14}
\end{eqnarray}

for any $\left(t,s\right)\in D_{2}$.

To obtain numerical results, we use a uniform grid of mesh-points
$\left(x,t,s\right)=\left(x_{j},t_{k},s_{m}\right)$ where

\[
x_{j}=j\Delta x,\quad\Delta x=\frac{\pi}{K},\; j=\overline{0,K},
\]

\[
t_{k}=k\Delta t,\; s_{l}=l\Delta s,\quad\Delta t=\Delta s=\frac{1}{M},\; k,l=\overline{0,M}.
\]

We thus seek the discrete solutions $u_{ex}^{j,k,l}=u_{ex}\left(x_{j},t_{k},s_{l}\right)$
and $v_{m}^{j,k,l}=v_{m}\left(x_{j},t_{k},s_{l}\right)$ given by
(\ref{eq:13-1}) and (\ref{eq:13})-(\ref{eq:14}), respectively.

By fixing $K=100,M=80$ and $p=10$, the numerical results are shown
in Table 1 and illustrated in Figs. 2-3 as below. Fig. 2 is the graphical
representations for curved surfaces of the exact solution $\left(t,s\right)\mapsto u_{ex}\left(\dfrac{\pi}{2},t,s\right)\equiv e^{-2t-s}$,
and of the approximate solution $\left(t,s\right)\mapsto v_{m}\left(\dfrac{\pi}{2},t,s\right)$
determined in (\ref{eq:13})-(\ref{eq:14}) with $m=10^{10}$. In
Fig. 3, we have drawn the exact solution $x\mapsto u_{ex}\left(x,0,0\right)\equiv\sin x$
and the approximate solution $x\mapsto v_{m}\left(x,0,0\right)$ where
$m$ are $5\times10^{9}$, $7\times10^{9}$ and $10^{10}$, respectively,
in order to see the convergence at $\left(t,s\right)=\left(0,0\right)$
as $m$ becomes very large, namely, the bound $\varepsilon$ in theoretical
result tends to zero. As in Figs. 2-3, we can conclude that the regularized
solution converges to the exact one as the error becomes smaller and
smaller. Moreover, convergence is, particularly, observed from the
absolute (abs.) errors in Table 1. Hence, our numerical results are
all reasonable for the theoretical result.

\section{Conclusion}

In this work, a regularization method has been successfully applied
to the inverse ultraparabolic problem. This method is to replace the
instability terms appearing in the formula of the solution which is
employed by semi-group theory. Therefore, such a way forms the so-called
regularized solution which strongly converges to the exact solution
in $L^{2}$-norm. We also obtain the error estimate which is of order
$\varepsilon^{\frac{p-T}{p}},p>T$. By a numerical example, application
of the method is flexible and calculation of successive approximations
is direct and straightforward. This work is more general than \cite{key-9},
a recent work of Zouyed et al., in both error estimate and the considered
problem.


\begin{backmatter}

\section*{Competing interests}
  The authors declare that they have no competing interests.

\section*{Author's contributions}
    VAK, LTL organized and wrote this manuscript. VAK, LTL and TTH contributed to all the steps of the proofs in this research together. NHT participated in the discussion and corrected the main results. All authors read and approved the final manuscript.

\section*{Acknowledgements}
  The authors wish to express their sincere
thanks to the anonymous referees and the handling editor 
for many constructive comments leading to the
improved version of this paper.
\end{backmatter}
\end{document}